\documentclass[a4paper]{amsart}
\usepackage{url}
\usepackage{graphicx}
\usepackage[all]{xy}
\usepackage[mathscr]{eucal}
\usepackage{amsmath,amssymb,amsfonts}
\newtheorem{theorem}{Theorem}[section]
\newtheorem{lemma}[theorem]{Lemma}
\newtheorem{corollary}[theorem]{Corollary}

\newtheorem{proposition}[theorem]{Proposition}
\newtheorem{remark}[theorem]{Remark}

\usepackage{stackengine}
\usepackage{xcolor}
\usepackage[all]{xy}

\title{A generalization of higher rank graphs}

\author{Mark V. Lawson}
\address{Mark V. Lawson, Department of Mathematics
and the
Maxwell Institute for Mathematical Sciences,
Heriot-Watt University,
Riccarton,
Edinburgh EH14 4AS,
UNITED KINGDOM}
\email{m.v.lawson@hw.ac.uk}

\author{Alina Vdovina}
\address{Alina Vdovina,
School of Mathematics and Statistics,
Herschel Building,
Newcastle University,
Newcastle-upon-Tyne NE1 7RU,
UNITED KINGDOM}
\email{alina.vdovina@ncl.ac.uk}

\begin{document}

\begin{abstract}
We introduce what we call `generalized higher rank $k$-graphs' 
as a class of categories equipped with a notion of size.
They extend not only the higher rank $k$-graphs,
but  also the Levi categories introduced by the first author as a categorical setting for graphs of groups.
We prove that examples of generalized higher rank $k$-graphs can be constructed using Zappa-Sz\'ep products of groupoids and higher rank graphs.

\end{abstract}
\maketitle

\section{Introduction}

 Higher rank $k$-graphs are a class of categories  
 important in the theory of $C^{\ast}$-algebras.
They were formally introduced in \cite{KP} by reconceptualizing the work of \cite{RS1999}.
 In this paper, we generalize such categories and show that recent work \cite{ABRW} is a special case leading us to an interesting class of left cancellative categories \cite{JS2014, JS2018}.
What connects this paper and \cite{ABRW} are Zappa-Sz\'ep products of categories.
Specifically, we observed that \cite[Lemma~3.4]{ABRW} has a significant intersection with the axioms (C1)--(C3),
and (SS1)--(SS8) of \cite{Lawson2008b} (also listed in Section~4 below) 
which describe the Zappa-Sz\'ep product of a category by a groupoid.
This means that Zappa-Sz\'ep products of a category by a groupoid can be viewed as
the correct setting for thinking about groupoids acting by partial symmetries on categories.
This was exactly the idea that lay behind \cite{Lawson2008a} where self-similar group actions --- of groups on free monoids --- are viewed as special cases of Zappa-Sz\'ep products of groups on free monoids and are used to construct a class of left cancellative monoids called {\em left Rees monoids}. This idea is explored in \cite{LW2015} and then generalized to categories in \cite{LW2017}; we shall see below that the categories that arise there, called Levi categories, are examples of generalized higher rank $1$-graphs. Levi categories provide a categorical setting for graphs of groups.
In this section, we recall the basic definitions we need to read this paper.
 
 In this paper, categories $C$ are always small and consist only of arrows. 
 The set of identities of $C$  is denoted by $C_{o}$.
 A subcategory of $C$ is said to be {\em wide} if it contains all the identities of $C$.
 If $x \in C$ denote by $\mathbf{d}(x)$ the unique identity such that $x \mathbf{d}(x)$ is defined, and by $\mathbf{r}(x)$ the unique identity such that $\mathbf{r}(x)x$ is defined.
 We call the identity $\mathbf{x}$ the {\em domain} of $x$ and the identity $\mathbf{r}(x)$ the {\em range} of $x$.
 The product $xy$ exists if and only if $\mathbf{d}(x) = \mathbf{r}(y)$;
 we also write $\exists xy$ in this case.
 An arrow $x \in C$ in a category is said to be {\em invertible} if there is an arrow $x^{-1} \in C$ such that
 $x^{-1}x = \mathbf{d}(x)$ and $xx^{-1} = \mathbf{r}(x)$.
 We also say that $x$ is an {\em isomorphism}.
A category in which every element is an isomorphism is called a {\em groupoid}.
If $C$ is a category, we shall denote its set of isomorphisms by $G$.
All identities of a category are isomorphisms.
We shall say that an isomorphism which is not an identity is {\em nontrivial}.
Clearly, $G$ is a wide subcategory of $C$.
More generally, the category $C$ is {\em left cancellative} if $ax = ay$ implies that $x = y$,
and is {\em right cancellative} if $xb = yb$ implies that $x = y$.
A category which is both left and right cancellative is said to be {\em cancellative}.
If $C$ is a category and $a \in C$, define 
$$aC = \{ax \colon x \in C \text{ and } \exists ax \},$$
the {\em principal right ideal generated by $a$},
and define
$$Ca = \{xa \colon x \in C \text{ and } \exists xa \},$$
the {\em principal left ideal generated by $a$}.
The two-sided case $CaC$ has the obvious definition.
Observe that $aC \subseteq \mathbf{r}(a)C$.
Thus every principal right ideal is contained in a principal right ideal
of the form $eC$ where $e$ is an identity.
We say that a principal right ideal $aC$ is {\em maximal} if it is properly contained in 
$\mathbf{r}(a)C$ and is the maximal principal right ideal in $\mathbf{r}(a)C$.
An element $a \in C$ of a category is called an {\em atom} if $a$ is noninvertible and if $a = bc$
 then either $b$ or $c$ is invertible.
 A category $C$ is said to be {\em equidivisible} if $ab = cd$
 implies either that there is an arrow $u$ such that $a = cu$ and $d = ub$
 or there exists an arrow $v$ such that $c = av$ and $b = vd$;
this definition was first stated for monoids \cite{Lallement}.

The set of natural numbers, along with zero, is denoted by $\mathbb{N}$.
We denote by $\mathbb{N}^{k}$ the monoid of all $k$-tuples of elements of $\mathbb{N}$,
and we denote the elements of  $\mathbb{N}^{k}$ by bold lower-case letters such as $\mathbf{m}$.
The additive identity of $\mathbb{N}^{k}$ is denoted by $\mathbf{0}$.
 There is a partial order $\leq$ defined on  $\mathbb{N}^{k}$
 by $\mathbf{m} \leq \mathbf{n}$ if and only if $\mathbf{m} + \mathbf{p} = \mathbf{n}$
 for some $\mathbf{p} \in  \mathbb{N}^{k}$.
 The element of $\mathbb{N}^{k}$ which is zero everywhere except at position $i$
 where it takes the value $1$ is denoted by $\mathbf{e}_{i}$.
 
 A category $C$ is called a {\em higher rank $k$-graph} if there is a functor 
 $\delta \colon C \rightarrow \mathbb{N}^{k}$ satisfying the  {\em unique factorization property} (UFP) as follows:
 if $\delta (a) = \mathbf{m} + \mathbf{n}$, then there exist unique elements $b,c$ such that 
 $a = bc$, $\delta (b) = \mathbf{m}$ and $\delta (c) = \mathbf{n}$.
 It follows that $C$ is a cancellative category with no non-identity invertible elements 
 and $\delta^{-1}(\mathbf{0})$ consists only of identities \cite{KP}.
 Higher rank $k$-graphs can be used to construct groups that include and therefore generalize the classical Thompson groups \cite{LV2020, LSV2021}.
 In the theory of $C^{\ast}$-algebras, the category $C$ is usually assumed to be countable but we do not need this assumption here.

 \section{Categories with size functors}
 
 Let $C$ be a small category.
 By a {\em size functor} on $C$,
  we mean a functor $\lambda \colon C \rightarrow \mathbb{N}^{k}$ such that 
  $\lambda^{-1}(\mathbf{0}) = G$ consists of all and only the invertible elements of $C$. Thus $G$ is the subgroupoid of $C$ consisting of all the invertible elements.
 We begin with some elementary results about categories equipped with size functors. 
 
 \begin{lemma}\label{lem:first} Let $C$ be a category equipped with a size functor 
 $\lambda$ with groupoid of invertible elements $G$.
 \begin{enumerate}
 \item  $aC = bC$ if and only if $aG = bG$.
 \item $Ca = Cb$ if and only if $Ga = Gb$.
 \item $CaC = CbC$ if and only if $GaG = GbG$.
 \end{enumerate}
 \end{lemma}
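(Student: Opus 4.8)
The plan is to prove all three equivalences by a single argument, since part (2) is the left--right dual of part (1) and part (3) is its two-sided analogue. Everything rests on one elementary observation about the codomain of the size functor: in $\mathbb{N}^{k}$ the equation $\mathbf{m} + \mathbf{n} = \mathbf{0}$ forces $\mathbf{m} = \mathbf{n} = \mathbf{0}$, and since $\lambda$ is a functor with $\lambda^{-1}(\mathbf{0}) = G$, it follows that in any factorization $x = yz$ with $\lambda(x) = \lambda(y)$ the arrow $z$ must be invertible (and dually $y$ is invertible when $\lambda(x) = \lambda(z)$). I would also record at the outset that $G \subseteq C$, so $aG \subseteq aC$, $Ga \subseteq Ca$ and $GaG \subseteq CaC$ for every $a \in C$, and that $a \in aG$, $a \in Ga$, $a \in GaG$ since $\mathbf{d}(a)$ and $\mathbf{r}(a)$ are invertible.

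For the implications going from ideals over $G$ to ideals over $C$, the key fact is that $x \in yC$ entails $xC \subseteq yC$ (if $x = yc$ then $xz = y(cz) \in yC$ once one checks the composability conditions via $\mathbf{d}$ and $\mathbf{r}$), and likewise $x \in yG$ entails $xG \subseteq yG$, with the left and two-sided versions proved identically. Hence, if $aG = bG$ then $a \in bG \subseteq bC$ and $b \in aG \subseteq aC$, so $aC \subseteq bC$ and $bC \subseteq aC$, giving $aC = bC$; the arguments for $Ca = Cb$ and $CaC = CbC$ are the same with left ideals, respectively two-sided ideals, in place of right ideals.

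For the converse implications, suppose $aC = bC$. Then $a \in aC = bC$ and $b \in bC = aC$, so $a = bc$ and $b = ad$ for suitable $c, d \in C$; hence $a = a(dc)$, and applying $\lambda$ gives $\lambda(d) + \lambda(c) = \mathbf{0}$, so $\lambda(c) = \lambda(d) = \mathbf{0}$ and $c, d \in G$. Therefore $a \in bG$ and $b \in aG$, and the containments from the previous paragraph give $aG = bG$. Part (2) follows by the left--right dual argument. For part (3), $CaC = CbC$ yields $a = ubv$ and $b = paq$ with $u, v, p, q \in C$, so $a = (up)a(qv)$ and thus $\lambda(up) = \lambda(qv) = \mathbf{0}$; since a sum in $\mathbb{N}^{k}$ is $\mathbf{0}$ only when both summands vanish, each of $u, v, p, q$ lies in $G$, whence $a \in GbG$ and $b \in GaG$, so $GaG = GbG$. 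The only thing needing attention — rather than genuine difficulty — is the bookkeeping of domains and ranges: one must confirm that each product written down (such as $cz$, $up$, or $gah$ for $g, h \in G$) is actually defined in $C$ and lands in $G$ whenever its size is $\mathbf{0}$. Note that no cancellativity of $C$ is used anywhere.
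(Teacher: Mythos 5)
Your proof is correct and follows essentially the same route as the paper: apply the size functor to the relation $a = a(dc)$ (resp.\ $a = (up)a(qv)$), use that a sum in $\mathbb{N}^{k}$ vanishes only when every summand does, and conclude the connecting arrows lie in $G$; the paper likewise treats (2) by symmetry and (3) analogously, only leaving the easy direction and the composability bookkeeping implicit where you spell them out.
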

 \begin{proof} We prove (1) since the proof of (2) follows by symmetry.
 The proof of (3) is then analogous.
 Only one direction needs to be proved.
 Suppose that $aC = bC$.
 Then $a = bx$ and $b = ay$ for some $x,y \in C$.
 Then $a = bx = ayx$.
 Thus $\lambda (a) = \lambda (a) + \lambda (y) + \lambda (x)$.
 It follows that $\mathbf{0} = \lambda (x) + \lambda (y)$.
 Thus $\lambda (x) = \lambda (y) = \mathbf{0}$.
 We deduce that both $x$ and $y$ are invertible.
 The result follows.
 \end{proof}
 
 We adopt semigroup notation and write 
 $a \, \mathscr{R} \, b$ when $aC = bC$
 and 
 $a \, \mathscr{L} \, b$ when $Ca = Cb$.
 We also write $a \, \mathscr{J} \, b$ when $CaC = CbC$.
 
 \begin{lemma}\label{lem:second} Let $C$ be a category equipped with a size functor 
 $\lambda$ with groupoid of invertible elements $G$.
 The element $a$ is invertible if and only if $aC = eC$ for some identity $e$.
 \end{lemma}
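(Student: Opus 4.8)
The plan is to prove the two implications separately, the only real content in each case being the bookkeeping of domains and ranges needed to know when composites are defined.

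First I would treat the ``only if'' direction. Suppose $a$ is invertible, with inverse $a^{-1}$ satisfying $a^{-1}a = \mathbf{d}(a)$ and $aa^{-1} = \mathbf{r}(a)$, and put $e = \mathbf{r}(a)$. One inclusion, $aC \subseteq \mathbf{r}(a)C = eC$, is recorded in the excerpt and holds in any category. For the reverse, note that since $e$ is an identity the elements of $eC$ are precisely the arrows $x$ with $\mathbf{r}(x) = e = \mathbf{r}(a)$. For such an $x$ the composite $a^{-1}x$ is defined, because $\mathbf{d}(a^{-1}) = \mathbf{r}(a) = \mathbf{r}(x)$, and then $a(a^{-1}x) = (aa^{-1})x = \mathbf{r}(a)x = x$, so $x \in aC$. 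Hence $eC \subseteq aC$ and therefore $aC = eC$.

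Next the ``if'' direction. Suppose $aC = eC$ for some identity $e$. Since $a = a\mathbf{d}(a) \in aC = eC$, we may write $a = ey$ with the composite $ey$ defined; as $e$ is an identity this forces $\mathbf{r}(y) = e$ and $a = y$, so $\mathbf{r}(a) = e$. Similarly $e = e\mathbf{d}(e) \in eC = aC$, so $e = ax$ for some $x$ with $ax$ defined. Applying the size functor to the identity $\mathbf{r}(a) = e = ax$ and using that $\lambda$ is a functor, we obtain $\mathbf{0} = \lambda(e) = \lambda(a) + \lambda(x)$ in $\mathbb{N}^{k}$, and hence $\lambda(a) = \mathbf{0}$. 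By the defining property of a size functor, $\lambda^{-1}(\mathbf{0}) = G$ is exactly the set of invertible elements, so $a$ is invertible. (Alternatively, once $aC = eC$, Lemma~\ref{lem:first}(1) gives $aG = eG$, and then $a \in aG = eG$ forces $a$ to be an isomorphism with $\mathbf{r}(a) = e$.) I do not expect any genuine obstacle here; the one thing to be careful about throughout is verifying at each step that the composites $a^{-1}x$, $ey$ and $ax$ are defined, which amounts to routine tracking of $\mathbf{d}$ and $\mathbf{r}$.
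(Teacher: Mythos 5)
Your proof is correct and follows essentially the same route as the paper: the forward direction is the paper's observation $aC = aa^{-1}C = \mathbf{r}(a)C$ written out in full, and your converse simply inlines the size-functor computation (applying $\lambda$ to $e = ax$) that the paper instead delegates to Lemma~\ref{lem:first}, a route you also note in your parenthetical. No gaps; the domain/range bookkeeping you flag is handled correctly.
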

 \begin{proof} Suppose that $a$ is invertible, then $aC = aa^{-1}C$.
 Conversely, suppose that $aC = eC$.
 Then by Lemma~\ref{lem:first}, we have that $a = eg = g$ where $g$ is invertible.
 \end{proof}
 
 We now relate atoms to principal right ideals.
 
 \begin{lemma}\label{lem:third}
  Let $C$ be a category equipped with a size functor 
 $\lambda$ with groupoid of invertible elements $G$.
 Then $a \in C$ is an atom if and only if
 $aC$ is maximal.
 \end{lemma}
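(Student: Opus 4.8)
The plan is to reduce everything to the elementary observation that, for $a, b \in C$, one has $aC \subseteq bC$ if and only if $a \in bC$, i.e. $a = bx$ for some $x \in C$, and that in this case necessarily $\mathbf{r}(a) = \mathbf{r}(bx) = \mathbf{r}(b)$, so that $bC \subseteq \mathbf{r}(b)C = \mathbf{r}(a)C$ automatically. Thus any principal right ideal lying above $aC$ automatically lies inside $\mathbf{r}(a)C$, and ``$aC$ is maximal'' unwinds to: $aC \subsetneq \mathbf{r}(a)C$, and $aC \subseteq bC$ implies $bC = aC$ or $bC = \mathbf{r}(a)C$. Once this bookkeeping is in place, both implications reduce to analysing a single factorization $a = bc$ in the light of Lemma~\ref{lem:first} and Lemma~\ref{lem:second}.

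For the forward implication I would assume $a$ is an atom. Since $a$ is noninvertible, Lemma~\ref{lem:second} gives $aC \neq \mathbf{r}(a)C$, and as $aC \subseteq \mathbf{r}(a)C$ always holds, the containment is proper. Then, given $aC \subseteq bC$, I would write $a = bc$ and invoke the atom hypothesis: if $c$ is invertible then $b = ac^{-1} \in aC$, forcing $bC = aC$; if $b$ is invertible then $bC = bb^{-1}C = \mathbf{r}(b)C = \mathbf{r}(a)C$. Hence $aC$ is maximal.

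For the converse I would assume $aC$ is maximal. If $a$ were invertible, Lemma~\ref{lem:second} would give $aC = \mathbf{r}(a)C$, contradicting proper containment, so $a$ is noninvertible. Given a factorization $a = bc$, I have $aC \subseteq bC \subseteq \mathbf{r}(a)C$ by the opening remark, so maximality forces $bC = \mathbf{r}(a)C$ or $bC = aC$. In the first case $\mathbf{r}(b)C = \mathbf{r}(a)C = bC$, so $b$ is invertible by Lemma~\ref{lem:second}. In the second case Lemma~\ref{lem:first} gives $b = ag$ for some $g \in G$; applying the size functor yields $\lambda(b) = \lambda(a) + \lambda(g) = \lambda(a)$, and then $\lambda(a) = \lambda(b) + \lambda(c)$ forces $\lambda(c) = \mathbf{0}$, so $c \in G$. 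Either way some factor is invertible, i.e. $a$ is an atom.

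I expect the only real subtlety to be the case $bC = aC$ in the converse: it is tempting to cancel $a$ in $a = bc = agc$ and conclude $c = g^{-1}$, but $C$ is not assumed left cancellative, so this step is not available; the size functor $\lambda$ is precisely what substitutes for cancellation, turning $\lambda(c) = \mathbf{0}$ into invertibility of $c$ via $\lambda^{-1}(\mathbf{0}) = G$. A lesser point to pin down is the exact reading of ``maximal'' in the definition, together with the fact that $aC \subseteq bC$ automatically entails $bC \subseteq \mathbf{r}(a)C$, which is what makes the squeeze $aC \subseteq bC \subseteq \mathbf{r}(a)C$ legitimate in both directions.
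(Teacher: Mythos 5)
Your proof is correct and follows essentially the same route as the paper's: both directions reduce to analysing a factorization $a = bc$, with Lemma~\ref{lem:second} handling the case $bC = \mathbf{r}(a)C$ and the size functor (cancellativity in $\mathbb{N}^{k}$, rather than cancellation in $C$) forcing $\lambda(c) = \mathbf{0}$ in the case $aC = bC$. You merely spell out a few details the paper leaves implicit, such as the proper containment $aC \subsetneq \mathbf{r}(a)C$ and the unwinding of ``maximal''.
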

 \begin{proof} Suppose that $a$ is an atom.
 If $aC \subseteq bC$ then $a = bc$ for some $c \in C$.
 If $b$ is invertible then $bC = bb^{-1}C$.
 If $c$ is invertible then $aC = bC$.
 It follows that $aC$ is maximal.
 Conversely, suppose that $aC$ is maximal and that $a = bc$.
 Then  $aC \subseteq bC$.
 Thus either $aC = bC$ or $bC = eC$ for some identity $e$.
 If the latter then by Lemma~\ref{lem:second}, we have that $b$ is invertible.
 If the former then $a = bg$ for some invertible element $g$.
 It follows that $bc = bg$.
 Thus $\lambda (c) = \mathbf{0}$ and so $c$ is invertible.
 \end{proof}
 
 The fact that the usual order on $\mathbb{N}^{k}$ has the property
 that there are only a finite number of elements below each element
 is used below.
 
 \begin{lemma}\label{lem:fourth} 
 Let $C$ be a category equipped with a size functor 
 $\lambda$ with groupoid of invertible elements $G$.
 Each noninvertible element
 can be written as finite product of atoms.
 \end{lemma}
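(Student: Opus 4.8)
The plan is to argue by well-founded induction on the value $\lambda(a) \in \mathbb{N}^{k}$, exploiting exactly the fact quoted just before the statement: the usual (product) order on $\mathbb{N}^{k}$ is well-founded, since only finitely many elements lie below any given one.

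First I would record the key monotonicity observation. Suppose $a = bc$ with, say, $c$ noninvertible. Then $\lambda (c) \neq \mathbf{0}$, because $\lambda^{-1}(\mathbf{0}) = G$; and since $\lambda$ is a functor, $\lambda (a) = \lambda (b) + \lambda (c)$, so $\lambda (b) \leq \lambda (a)$ with $\lambda (b) \neq \lambda (a)$, i.e.\ $\lambda (b) < \lambda (a)$ strictly. Symmetrically, if $b$ is noninvertible then $\lambda (c) < \lambda (a)$ strictly. Thus in any factorization $a = bc$ in which \emph{neither} factor is invertible, both $\lambda (b)$ and $\lambda (c)$ are strictly below $\lambda (a)$ in the order on $\mathbb{N}^{k}$.

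Now suppose, for contradiction, that some noninvertible element of $C$ is not a finite product of atoms, and among all such elements choose $a$ with $\lambda (a)$ minimal in the well-founded order (possible since the set of $\lambda$-values of counterexamples is a nonempty subset of $\mathbb{N}^{k}$). Then $a$ is not itself an atom, since an atom is a one-term product of atoms. As $a$ is noninvertible and not an atom, by definition there is a factorization $a = bc$ with neither $b$ nor $c$ invertible. By the observation above, $\lambda (b), \lambda (c) < \lambda (a)$, so by minimality each of $b$ and $c$ is a finite product of atoms, say $b = p_{1} \cdots p_{m}$ and $c = q_{1} \cdots q_{n}$ with every $p_{i}$ and $q_{j}$ an atom. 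These composites are all defined and $\mathbf{d}(p_{m}) = \mathbf{d}(b) = \mathbf{r}(c) = \mathbf{r}(q_{1})$, so $a = p_{1} \cdots p_{m} q_{1} \cdots q_{n}$ exhibits $a$ as a finite product of atoms, contradicting the choice of $a$.

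There is no serious obstacle here; the only points needing care are (i) that both factors in a proper factorization of a non-atom have strictly smaller $\lambda$-value, which is precisely where the hypothesis $\lambda^{-1}(\mathbf{0}) = G$ is used, and (ii) that the order on $\mathbb{N}^{k}$ is well-founded, so a minimal counterexample exists. One could equally run the argument as ordinary strong induction on the integer $|\lambda (a)|$, the sum of the coordinates of $\lambda (a)$, avoiding even the language of well-foundedness.
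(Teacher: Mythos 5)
Your proof is correct and follows essentially the same route as the paper: the paper factors a non-atom as a product of two noninvertible elements, notes that both $\lambda$-values strictly decrease, and says "the procedure can be repeated," relying on the finiteness of elements below $\lambda(a)$ in $\mathbb{N}^{k}$ for termination. Your minimal-counterexample (well-founded induction) formulation is just a careful write-up of that same descent argument.
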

 \begin{proof} Let $a$ be a noninvertible element.
 If $a$ is an atom then we are done.
 If not, then we can write $a = bc$ where neither $b$ nor $c$ is invertible.
 Observe that $\lambda (a) = \lambda (b) + \lambda (c)$
 and, by assumption, $\lambda (b), \lambda (c) < \lambda (a)$.
 If $b$ and $c$ are atoms then we are done
 otherwise the procedure can be repeated.
 \end{proof}
 
 The groupoid $G$ acts on both sides of the set of atoms.
 
 \begin{lemma}\label{lem:sixth} 
 Let $C$ be a category equipped with a size functor 
 $\lambda$ with groupoid of invertible elements $G$.
 \begin{enumerate}
 \item If $g$ is invertible, $a$ is an atom and $\exists ga$ then $ga$ is an atom.
 \item If $g$ is invertible, $a$ is an atom and $\exists ag$ then $ag$ is an atom.
 \item Let $X$ be a transversal of the set of generators of the maximal principal right ideals.
 Let $g$ be invertible and let $a$ be an atom such that $\exists ga$.
 Then $ga = xh$ for some $x \in X$ and invertible $h$.
 \end{enumerate}
 \end{lemma}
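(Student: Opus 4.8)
The plan is to prove (1) directly, deduce (2) from (1) by passing to the opposite category, and obtain (3) as a short consequence of (1) together with Lemmas~\ref{lem:first} and~\ref{lem:third}.

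For (1), suppose $g$ is invertible, $a$ is an atom, and $\exists ga$. First, $ga$ is noninvertible, since otherwise $a = g^{-1}(ga)$ would be a composite of invertible elements and hence invertible, contradicting that $a$ is an atom; equivalently, $\lambda (ga) = \lambda (g) + \lambda (a) = \lambda (a) \neq \mathbf{0}$ as $a \notin G$. Now suppose $ga = bc$. Chasing ranges, $\mathbf{r}(b) = \mathbf{r}(ga) = \mathbf{r}(g) = \mathbf{d}(g^{-1})$, so $g^{-1}b$ is defined and $a = g^{-1}(ga) = (g^{-1}b)c$. Since $a$ is an atom, either $c$ is invertible --- and we are done --- or $g^{-1}b$ is invertible, in which case $b = g(g^{-1}b)$ is a composite of invertibles (the composite is defined because $\mathbf{d}(g) = \mathbf{r}(g^{-1}) = \mathbf{r}(g^{-1}b)$), so $b$ is invertible. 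Hence $ga$ is an atom.

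For (2), note that the opposite category $C^{\mathrm{op}}$ again carries $\lambda$ as a size functor, because $\mathbb{N}^{k}$ is commutative and the notions of invertible element and of atom are self-dual. Applying (1) in $C^{\mathrm{op}}$ to the atom $a$ and the invertible $g$ with $\exists ag$ shows that $ag$ is an atom in $C^{\mathrm{op}}$, hence in $C$. (Alternatively, one runs the argument of (1) on the right, using $a = (ag)g^{-1}$.) For (3), let $g$ be invertible and $a$ an atom with $\exists ga$. By (1), $ga$ is an atom, so by Lemma~\ref{lem:third} the principal right ideal $(ga)C$ is maximal. By Lemma~\ref{lem:third} the generators of the maximal principal right ideals are precisely the atoms, and two atoms generate the same ideal exactly when they are $\mathscr{R}$-related; hence $X$ contains a unique element $x$ with $xC = (ga)C$. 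By Lemma~\ref{lem:first}(1) this means $xG = (ga)G$, and since $ga \in (ga)G$ we conclude $ga = xh$ for some invertible $h$, as required.

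There is no serious obstacle here: the only care needed is the bookkeeping of domains and ranges to see that composites such as $g^{-1}b$ and $g(g^{-1}b)$ are defined, together with the observation --- via Lemmas~\ref{lem:first} and~\ref{lem:third} --- that $X$ meets each maximal principal right ideal in exactly one of its (necessarily atomic) generators, so that the element $x$ in (3) is well defined.
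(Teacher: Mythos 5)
Your proof is correct and follows essentially the same route as the paper: conjugating the factorization $ga=bc$ by $g^{-1}$ to use atomicity of $a$ for (1), duality (or a symmetric argument) for (2), and combining the atom--maximal-ideal correspondence with Lemma~\ref{lem:first} to get (3). You simply spell out the domain/range bookkeeping and the role of the transversal $X$ more explicitly than the paper's terse proof does.
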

 \begin{proof} (1) First, $ga$ cannot be invertible because this would imply that $a$ was invertible. Suppose that $ga = bc$.
Then $a = (g^{-1}b)c$.
Thus $g^{-1}b$ is invertible in which case $b$ is invertible or $c$ is invertible.
 It follows that $ga$ is an atom.
The proof of (2) is similar to the proof of (1).
(3) We proved in (1) that $ga$ is an atom.
 Thus $gx \, \mathscr{R} \, x$ for some $x \in X$.
 It follows that $gx = xh$ for some $h \in G$
 by Lemma~\ref{lem:first}.
 \end{proof}

We now come to our first main result.

\begin{proposition}\label{prop:fifth} 
Let $C$ be a category equipped with a size functor 
 $\lambda$ with groupoid of invertible elements $G$.
Let $X$ be a transversal of the set of generators of the maximal principal right ideals.
Then $C = \langle X \rangle G$
where $\langle X \rangle$ is the subcategory generated by the elements $X$. 
\end{proposition}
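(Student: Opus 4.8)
The plan is to argue by induction on the size $\lambda(a)$ of an element $a\in C$, using that the order on $\mathbb{N}^{k}$ is well-founded --- equivalently, by induction on $\sum_{i}\lambda(a)_{i}\in\mathbb{N}$. Throughout, $\langle X\rangle$ is understood to be the wide subcategory generated by $X$, so that $C_{o}\subseteq\langle X\rangle$; the inclusion $\langle X\rangle G\subseteq C$ is then trivial, and only $C\subseteq\langle X\rangle G$ needs proof. For the base case, if $\lambda(a)=\mathbf{0}$ then $a$ is invertible, and writing $a=\mathbf{r}(a)a$ exhibits $a\in C_{o}G\subseteq\langle X\rangle G$.

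For the inductive step, let $a$ be noninvertible and assume the claim for every element of strictly smaller size. By Lemma~\ref{lem:fourth} we may write $a=a_{1}a'$ with $a_{1}$ an atom and $a'\in C$ (take $a_{1}$ to be the first factor in a factorization of $a$ into atoms, and $a'=\mathbf{d}(a)$ in the degenerate case where $a$ is itself an atom). By Lemma~\ref{lem:third}, $a_{1}C$ is a maximal principal right ideal, so there is a unique $x\in X$ with $a_{1}C=xC$; by Lemma~\ref{lem:first}(1) this gives $a_{1}G=xG$, so $a_{1}=xg$ for some $g\in G$ (this is also immediate from Lemma~\ref{lem:sixth}(3), taking the invertible element there to be the identity $\mathbf{r}(a_{1})$). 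Hence $a=x(ga')$ with $ga'$ defined, and since $x$ is noninvertible we have $\lambda(x)\neq\mathbf{0}$, whence $\lambda(ga')=\lambda(a')=\lambda(a)-\lambda(x)$ is strictly smaller than $\lambda(a)$. By the induction hypothesis $ga'=wh$ for some $w\in\langle X\rangle$ and $h\in G$. Finally $\mathbf{d}(x)=\mathbf{r}(ga')=\mathbf{r}(w)$, so the product $xw$ is defined and lies in $\langle X\rangle$, giving $a=(xw)h\in\langle X\rangle G$, as required.

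The only step that requires genuine care is the initial reduction: one must peel off from $a$ a left factor that is an atom and whose principal right ideal is consequently maximal, which is exactly what Lemma~\ref{lem:fourth} combined with Lemma~\ref{lem:third} supplies. After that, the argument is bookkeeping about how the groupoid $G$ transfers across an atom, governed by Lemma~\ref{lem:first} and the defining property of the transversal $X$, and I anticipate no further obstacle.
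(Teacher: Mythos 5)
Your proof is correct and follows essentially the same route as the paper's: factor a noninvertible element into atoms (Lemma~\ref{lem:fourth}), replace the leading atom by a transversal element times an invertible (via Lemma~\ref{lem:third}, Lemma~\ref{lem:first}, or Lemma~\ref{lem:sixth}(3)), and push the invertible element to the right; the paper just phrases the recursion as "repeating this" along the atomic factorization, whereas you package it as an induction on $\lambda(a)$, and you additionally make explicit the harmless point about identities and invertible elements that the paper leaves implicit.
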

\begin{proof} Let $a \in C$ be any noninvertible element.
Then by Lemma~\ref{lem:fourth}, we may write $a = a_{1} \ldots a_{n}$
where each $a_{i}$ is an atom.
Let $a_{1} \mathscr{R} x_{1}$ where $x_{1} \in X$
by Lemma~\ref{lem:sixth}.
Then $a_{1} = x_{1}h_{1}$ for some $h_{1} \in G$.
Thus $a = x_{1}(h_{1}a_{2})a_{3} \ldots a_{n}$.
We now apply Lemma~\ref{lem:sixth} to the atom $h_{1}a_{2}$.
Repeating this, 
we therefore have that $a = x_{1} \ldots x_{n}h$
 where $x_{i} \in X$ and $h \in G$. 
 \end{proof}

 \section{Generalized higher rank $k$-graphs}

 Proposition~\ref{prop:fifth} gives us some insight into the structure of 
 categories equipped with size functors,
 but to obtain more detailed results, we shall need to assume
 more than the mere existence of a size functor.
 The next definition is the main one of this paper.
 It extends the definition of a higher rank $k$-graph in a natural way by incorporating isomorphisms.\\
 
 \noindent
 {\bf Definition.} 
 Let $C$ be a category equipped with a size functor
 $\lambda$ to $\mathbb{N}^{k}$.
We say that $\lambda$ satisfies the  {\em Weak Factorization Property} (WFP)
if the following holds:
suppose that $\lambda (a) = \mathbf{m} + \mathbf{n}$.
Then there exist $a_{1},a_{2} \in C$ such that $a = a_{1}a_{2}$ where
$\lambda (a_{1}) = \mathbf{m}$ and $\lambda (a_{2}) = \mathbf{n}$ 
such that if $a = b_{1}b_{2}$ where $\lambda (b_{1}) = \mathbf{m}$
and $\lambda (b_{2}) = \mathbf{n}$
then $b_{1} = a_{1}g$ and $b_{2} = g^{-1}a_{2}$ for some invertible element $g$. 
We call such a category a {\em generalized higher rank $k$-graph}.\\

The above definition was motivated by \cite{Lawson2008a, LW2015, LW2017}
combined with the definition of a higher rank $k$-graph.
Observe that higher rank graphs are precisely those generalized higher rank $k$-graphs
which have only the identities as invertible elements (and are countable). 

\begin{lemma}\label{lem:atomic} Let $C$ be a generalized higher rank $k$-graph.
Then the atoms are precisely the elements $a \in C$ 
such that $\lambda (a) = \mathbf{e}_{i}$
for some $i$.
\end{lemma}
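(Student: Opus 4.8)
The plan is to prove the two inclusions separately, the common input being an elementary observation about the monoid $\mathbb{N}^{k}$: a nonzero element $\mathbf{n} \in \mathbb{N}^{k}$ equals some $\mathbf{e}_{i}$ if and only if it cannot be written as $\mathbf{n} = \mathbf{m} + \mathbf{p}$ with $\mathbf{m} \neq \mathbf{0}$ \emph{and} $\mathbf{p} \neq \mathbf{0}$. (If $\mathbf{n}$ has some coordinate $\geq 2$, or at least two nonzero coordinates, one peels off a single $\mathbf{e}_{i}$ and is left with a nonzero remainder.)

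First I would show that if $\lambda (a) = \mathbf{e}_{i}$ then $a$ is an atom. Since $\lambda (a) \neq \mathbf{0}$ and $\lambda^{-1}(\mathbf{0}) = G$, the element $a$ is noninvertible. Now suppose $a = bc$. Applying $\lambda$ gives $\mathbf{e}_{i} = \lambda (b) + \lambda (c)$, so by the observation above either $\lambda (b) = \mathbf{0}$ or $\lambda (c) = \mathbf{0}$; that is, $b \in G$ or $c \in G$. Hence $a$ is an atom.

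Conversely, suppose $a$ is an atom. As $a$ is noninvertible, $\lambda (a) \neq \mathbf{0}$. If $\lambda (a)$ were not of the form $\mathbf{e}_{i}$, then by the observation we could write $\lambda (a) = \mathbf{m} + \mathbf{n}$ with $\mathbf{m} \neq \mathbf{0}$ and $\mathbf{n} \neq \mathbf{0}$. By the WFP there exist $a_{1}, a_{2} \in C$ with $a = a_{1}a_{2}$, $\lambda (a_{1}) = \mathbf{m}$ and $\lambda (a_{2}) = \mathbf{n}$. Since $\mathbf{m}, \mathbf{n} \neq \mathbf{0}$, neither $a_{1}$ nor $a_{2}$ lies in $G$, contradicting that $a$ is an atom. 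Therefore $\lambda (a) = \mathbf{e}_{i}$ for some $i$, completing the proof.

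The argument is short and I do not anticipate a serious obstacle; the only points needing a little care are the elementary claim about $\mathbb{N}^{k}$ and, when invoking the WFP, choosing the decomposition $\mathbf{m} + \mathbf{n}$ of $\lambda (a)$ so that \emph{both} summands are nonzero, which is exactly what forces the two factors $a_{1}, a_{2}$ to be noninvertible.
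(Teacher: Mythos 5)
Your proof is correct and follows essentially the same route as the paper's: the forward inclusion uses additivity of $\lambda$ to force one factor into $\lambda^{-1}(\mathbf{0}) = G$, and the converse uses the WFP applied to a decomposition of $\lambda(a)$ into two nonzero summands to exhibit a factorization of $a$ into two noninvertible elements. Your explicit remarks that $a$ is noninvertible and that a nonzero element of $\mathbb{N}^{k}$ other than an $\mathbf{e}_{i}$ splits into two nonzero parts are just slightly more careful versions of what the paper does.
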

\begin{proof} Suppose that $\lambda (a) = \mathbf{e}_{i}$.
We prove that $a$ is an atom.
Let $a = bc$.
Then $\lambda (a) = \lambda (b) + \lambda (c)$.
It follows that either $\lambda (b) = \mathbf{0}$ or $\lambda (c) = \mathbf{0}$
from which we deduce that either $b$ or $c$ is invertible.
Now, suppose that $\lambda (a)$ is zero but not equal to any $\mathbf{e}_{i}$.
Then, for some $i$ we can write $\lambda (a) = \mathbf{e}_{i} + \mathbf{m}$
where $\mathbf{e}_{i}, \mathbf{m} \neq \mathbf{0}$.
Then by the (WFP), we have that $a = bc$ where $\lambda (b) = \mathbf{e}_{i}$
and $\lambda (c) = \mathbf{m}$.
It follows that neither $b$ nor $c$ is invertible and so $a$ cannot be an atom.
\end{proof}

The following is an immediate corollary to Lemma~\ref{lem:atomic}.

\begin{corollary}\label{cor:levi-one} Let $C$ be a generalized higher rank $1$-graph.
Then the atoms are precisely those elements $a \in C$ such that $\lambda (a) = 1$.
\end{corollary}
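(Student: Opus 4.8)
The plan is to simply specialize Lemma~\ref{lem:atomic} to the case $k = 1$. When $k = 1$ the codomain $\mathbb{N}^{k}$ of the size functor is just $\mathbb{N}$, and there is only one standard basis vector, namely $\mathbf{e}_{1}$, which under the identification $\mathbb{N}^{1} = \mathbb{N}$ is the number $1$. Thus the condition ``$\lambda(a) = \mathbf{e}_{i}$ for some $i$'' appearing in Lemma~\ref{lem:atomic} reduces, in the rank one case, to the single condition ``$\lambda(a) = 1$''.

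Concretely, I would first invoke Lemma~\ref{lem:atomic}, which asserts that in any generalized higher rank $k$-graph the atoms are exactly the elements $a$ with $\lambda(a) = \mathbf{e}_{i}$ for some $i$. Then I would note that when $k = 1$ this list of admissible sizes collapses to the single value $1 \in \mathbb{N}$, so $a$ is an atom if and only if $\lambda(a) = 1$. There is no genuine obstacle here: the statement is a direct reading of Lemma~\ref{lem:atomic} in the case $k=1$, and the only point that deserves to be made explicit is the harmless identification of $\mathbb{N}^{1}$ with $\mathbb{N}$ and of $\mathbf{e}_{1}$ with $1$.
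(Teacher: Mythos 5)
Your proposal is correct and is exactly the paper's reasoning: the paper states the corollary as an immediate consequence of Lemma~\ref{lem:atomic}, and specializing to $k=1$ (identifying $\mathbb{N}^{1}$ with $\mathbb{N}$ and $\mathbf{e}_{1}$ with $1$) is all that is needed.
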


The following result provides an alternative characterization of generalized higher rank $1$-graphs and shows that our generalization contains other examples apart
from higher rank $k$-graphs.
Recall that a {\em Levi category} \cite{LW2017}
is an equidivisible category equipped with
a  size functor $\lambda \colon C \rightarrow \mathbb{N}$ such that
$\lambda^{-1}(1)$ consists of all and only the atoms of $C$.
We introduced Levi categories in \cite{LW2017} to describe graphs of groups.

\begin{theorem}\label{them:levi-two} \mbox{}
\begin{enumerate}

\item Every Levi category is a  generalized higher rank $1$-graph.

\item Every generalized higher rank $1$-graph is a Levi category.

\end{enumerate}
\end{theorem}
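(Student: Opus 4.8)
The plan is to prove the two implications in turn, in both cases exploiting the uniqueness clause of the (WFP) together with the functor $\lambda$, which is what upgrades ``$x$ divides $y$'' to ``$x$ and $y$ differ by an invertible element''. For (1), let $C$ be a Levi category. By definition it is equidivisible and carries a size functor $\lambda\colon C\rightarrow\mathbb{N}$ such that $\lambda^{-1}(1)$ is precisely the set of atoms of $C$; in particular $C$ is a category equipped with a size functor in the sense of Section~2, so Lemma~\ref{lem:fourth} applies. I must verify the (WFP). For existence, suppose $\lambda(a)=m+n$. If $m+n=0$ put $a_{1}=a$ and $a_{2}=\mathbf{d}(a)$. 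Otherwise $a$ is noninvertible, so by Lemma~\ref{lem:fourth} we may write $a=c_{1}\ldots c_{\ell}$ with each $c_{i}$ an atom; since every atom has size $1$ and $\lambda$ is additive, $\ell=\lambda(a)=m+n$, and grouping the first $m$ factors into $a_{1}$ and the last $n$ into $a_{2}$ gives $a=a_{1}a_{2}$ with $\lambda(a_{1})=m$ and $\lambda(a_{2})=n$. For the uniqueness clause, suppose $a=b_{1}b_{2}$ with $\lambda(b_{1})=m$ and $\lambda(b_{2})=n$. Applying equidivisibility to $a_{1}a_{2}=b_{1}b_{2}$ gives either an element $u$ with $a_{1}=b_{1}u$ and $b_{2}=ua_{2}$, or an element $v$ with $b_{1}=a_{1}v$ and $a_{2}=vb_{2}$. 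In the first case $m=\lambda(a_{1})=\lambda(b_{1})+\lambda(u)=m+\lambda(u)$, so $u$ is invertible, and $g=u^{-1}$ satisfies $b_{1}=a_{1}g$ and $b_{2}=g^{-1}a_{2}$. In the second case $m=\lambda(b_{1})=m+\lambda(v)$, so $v$ is invertible, and $g=v$ satisfies $b_{1}=a_{1}g$ and $b_{2}=g^{-1}a_{2}$. Hence $C$ is a generalized higher rank $1$-graph.

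For (2), let $C$ be a generalized higher rank $1$-graph. That $\lambda^{-1}(1)$ consists of all and only the atoms is exactly Corollary~\ref{cor:levi-one}, so it remains to show that $C$ is equidivisible. First I record the following consequence of the uniqueness clause of the (WFP): if $xy=zw$ and $\lambda(x)=\lambda(z)$, then there is an invertible $g$ with $z=xg$ and $w=g^{-1}y$. Indeed, then also $\lambda(y)=\lambda(xy)-\lambda(x)=\lambda(zw)-\lambda(z)=\lambda(w)$, so $(x,y)$ and $(z,w)$ are both factorizations of $p:=xy$ into the sizes $(\lambda(x),\lambda(y))$; taking the distinguished factorization $p=p_{1}p_{2}$ supplied by the (WFP) we obtain invertibles $a,b$ with $x=p_{1}a$, $y=a^{-1}p_{2}$, $z=p_{1}b$ and $w=b^{-1}p_{2}$, whence $z=p_{1}b=xa^{-1}b$ and $w=b^{-1}p_{2}=b^{-1}ay=(a^{-1}b)^{-1}y$, so $g:=a^{-1}b$ works.

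Now let $ab=cd$. By symmetry we may assume $\lambda(a)\leq\lambda(c)$; the case $\lambda(c)\leq\lambda(a)$ is handled by the same argument with the two pairs interchanged and produces instead an element $u$ with $a=cu$ and $d=ub$. Using the existence clause of the (WFP), write $c=c_{1}c_{2}$ with $\lambda(c_{1})=\lambda(a)$. Then $ab=c_{1}(c_{2}d)$ with $\lambda(a)=\lambda(c_{1})$, so by the consequence above there is an invertible $g$ with $c_{1}=ag$ and $c_{2}d=g^{-1}b$. Therefore $c=c_{1}c_{2}=a(gc_{2})$ and $b=g(c_{2}d)=(gc_{2})d$, so $v:=gc_{2}$ satisfies $c=av$ and $b=vd$. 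Hence $C$ is equidivisible, and so it is a Levi category.

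I expect the only real friction to be bookkeeping, namely checking that the various composites involving invertible elements (such as $a^{-1}b$, $gc_{2}$ and $g^{-1}b$) are actually defined; in each case this follows by chasing domains and ranges through the equations already in hand, and there is no conceptual obstacle — the whole argument is carried by the uniqueness-up-to-$G$ built into the (WFP) and by the way $\lambda$ converts divisibility into equality modulo $G$.
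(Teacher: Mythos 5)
Your proposal is correct and follows essentially the same route as the paper: for (1) existence of the factorization comes from the atomic decomposition of Lemma~\ref{lem:fourth} plus $\lambda(\text{atom})=1$, and uniqueness up to an invertible comes from equidivisibility plus the size functor; for (2) one reduces to equidivisibility via Corollary~\ref{cor:levi-one} and splits the factor of larger size using the (WFP). Your extra care — handling the degenerate sizes explicitly and recording the lemma that any two equal-size factorizations are related through the distinguished one by an invertible — only makes explicit steps the paper treats as immediate.
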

\begin{proof} (1) Let $C$ be a Levi category.
Denote its size functor by $\lambda \colon C \rightarrow \mathbb{N}$. 
Let $a$ be a noninvertible element.
By Lemma~\ref{lem:fourth}, we may write $a = a_{1} \ldots a_{m}$
where the $a_{i}$ are atoms. By assumption, $\lambda (a_{i}) = 1$
and so $\lambda (a) = m$.
Let $m = s + t$.
Then using the above representation of $a$ as a product of atoms, 
we may find elements $x$ and $y$ such that $a = xy$
where $\lambda (x) = s$ and $\lambda (y) = t$.
Specifically, in the case where $s,t \neq 0$ then 
define $x = a_{1} \ldots a_{s}$ and $y = a_{s+1} \ldots a_{m}$.
If $s = 0$ then chose $x = \mathbf{r}(a_{1})$ and $y = a$,
and if $t = 0$ then choose $x = a$ and $y = \mathbf{d}(a_{m})$.
Now suppose that $a = wz$ where $\lambda(w) = s$ and $\lambda (z) =  t$.
Then $a = wz = xy$ where $\lambda (w) = \lambda (x)$ and $\lambda (z) = \lambda (y)$.
We use equidivisibility, and without loss of generality, $w = xu$ and $y = uz$
for some $u \in C$.
Thus $\lambda (u) = 0$ and so $u$ is invertible.
It follows that $w = xu$ and $z = u^{-1}y$.

(2) Let $C$ be a generalized higher rank $1$-graph.
Because of Lemma~\ref{cor:levi-one}, it is enough to prove that $C$ is equidivisible.
Let $ab = cd$.
There are two cases.
We prove the first and the second follows by symmetry.
Suppose that $\lambda (a) \geq \lambda (c)$.
Then $\lambda (c) + m = \lambda (a)$ for some natural number $m$.
By the (WFP), we can write $a = xy$ where $\lambda (x) = \lambda (c)$
and $\lambda (y) = m$.
Thus $x(yb) = cd$.
Observe that $\lambda (x) = \lambda (c)$ and so $\lambda (yb) = \lambda (d)$.
By the (WFP), there is an invertible element $g$ such that
$x = cg$ and $yb = g^{-1}d$.
It follows that $a = xy = c(gy)$ and and $d = (gy)b$.
Put $u = gy$ and we have that $a = cu$ and $d = ub$.
\end{proof}

\section{The structure of a class of generalized higher rank $k$-graphs}

In this section, we shall show how to construct a class of generalized higher rank $k$-graphs.
The following result is a step in this direction.

\begin{lemma}\label{lem:left-cancellation} Let $C$ be a generalized higher rank $k$-graph with groupoid of invertible elements $G$.
Choose and fix a transversal $X$ of the maximal right principal ideals.
Let $\mathscr{X} = \langle X \rangle$.
Then $C$ is left cancellative if and only if 
$ug = u$ implies $g = \mathbf{d}(a)$ where $u \in \mathscr{X}$ and $g \in G$.
\end{lemma}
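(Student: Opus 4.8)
The plan is to treat the two implications separately, reducing the substantive one to a single computation with an atom of $X$. The forward implication is immediate: if $C$ is left cancellative and $ug = u$ with $u \in \mathscr{X}$ and $g \in G$, then from $ug = u = u\mathbf{d}(u)$ we cancel $u$ on the left to get $g = \mathbf{d}(u)$.

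For the converse, assume that $ug = u$ (with $u \in \mathscr{X}$, $g \in G$) always forces $g = \mathbf{d}(u)$. The heart of the argument is to show that every $x \in X$ is left cancellable. Suppose $xb = xc$. By Lemma~\ref{lem:atomic} we have $\lambda(x) = \mathbf{e}_{i}$ for some $i$, and comparing sizes in $xb = xc$ gives $\lambda(b) = \lambda(c)$; call this $\mathbf{n}$. Then $x \cdot b$ and $x \cdot c$ are two factorizations of the common element $xb = xc$ into a piece of size $\mathbf{e}_{i}$ followed by a piece of size $\mathbf{n}$. Comparing each of them with the factorization guaranteed by the (WFP) for $\lambda(xb) = \mathbf{e}_{i} + \mathbf{n}$ and combining the two resulting invertible elements, we obtain an invertible $g$ with $xg = x$ and $c = g^{-1}b$. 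Since $x \in X \subseteq \mathscr{X}$, the hypothesis yields $g = \mathbf{d}(x)$, which equals $\mathbf{r}(b)$ because $xb$ is defined; hence $c = g^{-1}b = \mathbf{r}(b)\,b = b$, as required.

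To pass from atoms in $X$ to all of $C$, recall that every invertible arrow is left cancellable (cancel by its inverse) and that a composite of left-cancellable arrows is again left cancellable. Since each $u \in \mathscr{X}$ is either invertible or a finite product of elements of $X$, every element of $\mathscr{X}$ is left cancellable. Finally, by Proposition~\ref{prop:fifth}, an arbitrary $a \in C$ has the form $a = uh$ with $u \in \mathscr{X}$ and $h \in G$; if $ab = ac$ then $u(hb) = u(hc)$, so $hb = hc$ by left cancellation of $u$, and then $b = c$ since $h$ is invertible. Thus $C$ is left cancellative.

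I expect the only real obstacle to be the atom step: one must read off from the statement of the (WFP) the exact fact that any two factorizations of $xb$ of the same type differ by an invertible element inserted between $x$ and the second factor, so that the configuration $xg = x$ --- and hence the hypothesis --- becomes applicable. Once that invertible element has been extracted, the remainder is routine tracking of domains and ranges.
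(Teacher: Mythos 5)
Your proof is correct and rests on the same two ingredients as the paper's: the (WFP) applied to an equality $xb = xc$ of products with matching sizes, which yields an invertible $g$ with $xg = x$ and $c = g^{-1}b$ so that the hypothesis kills $g$, together with Proposition~\ref{prop:fifth} to reach all of $C$. The only difference is organizational: the paper first upgrades the hypothesis from $\mathscr{X}$ to arbitrary $a \in C$ by writing $a = uh$ and conjugating (getting $u(hgh^{-1}) = u$), then applies the (WFP) once to a general element, whereas you apply the (WFP) at the level of the generators $x \in X$ and propagate left cancellability by composition and Proposition~\ref{prop:fifth}; both routes are valid, and yours is just a slightly longer packaging of the same idea.
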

\begin{proof} 
Only one direction need be proved.
We prove first that if $a \in C$ and $g \in G$ then $a = ag$ implies that $g = \mathbf{d}(a)$.
Let $a \in C$ be noninvertible.
Then $a = uh$ where $u \in \mathscr{X}$ and $h \in G$ by Proposition~\ref{prop:fifth}.
Thus $ag = uhg = uh$.
It follows that $u = uhgh^{-1}$.
By assumption, $\mathbf{d}(u) = hgh^{-1}$ from which we deduce that $g = \mathbf{d}(a)$.
Now, let $ab = ac$.
Then there is an invertible element $g$ such that $a = ag$ and $b = g^{-1}c$.
By assumption, $g = \mathbf{d}(a)$ and so $b = c$, as claimed.
\end{proof}

We shall study generalized higher rank $k$-graphs that satisfy a strengthened form of the condition stated in Lemma~\ref{lem:left-cancellation}.
Let $C$ be a generalized higher rank $k$-graph.
Choose and fix a transversal $X$ of the maximal right principal ideals.
Let $\mathscr{X} = \langle X \rangle$.
We say that $C$ satisfies the {\em $\mathscr{R}$-condition}
if $u,v \in \mathscr{X}$ and $u = vg$ in $C$ implies that $u = v$ and $g$ is an identity.
By Lemma~\ref{lem:left-cancellation}, the category $C$ is left cancellative.
We can now refine Proposition~\ref{prop:fifth}.

\begin{theorem}\label{them:theorem-one} 
Let $C$ be a generalized higher rank $k$-graph satisfying the $\mathscr{R}$-condition,
let $X$ be a transversal of the set of generators of the maximal principal right ideals,
let $\mathscr{X}$ be the wide subcategory of $C$ generated by $X$
and let $G$ be the groupoid of invertible elements of $C$.
Then $\mathscr{X}$ is a higher rank $k$-graph
and every element of $C$ has a unique representation as an element of $\mathscr{X}G$.
\end{theorem}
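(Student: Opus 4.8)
The plan is to take $\delta=\lambda|_{\mathscr{X}}$ as the candidate size functor on the wide subcategory $\mathscr{X}$ and to verify that it satisfies the unique factorization property; once this is done, $\mathscr{X}$ is a higher rank $k$-graph by the definition recalled in Section~1. The technical core, which I would prove first and use throughout, is that the $\mathscr{X}G$-representation guaranteed by Proposition~\ref{prop:fifth} is unique: if $ug=vh$ with $u,v\in\mathscr{X}$ and $g,h\in G$, then cancelling $g$ on the right gives $u=v(hg^{-1})$ in $C$ with $hg^{-1}\in G$, so the $\mathscr{R}$-condition forces $u=v$ and $hg^{-1}$ to be an identity, whence $g=h$. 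Along the way one also notes that an invertible element of $\mathscr{X}$ lies in $G$ and, being a product of elements of $X$, must be an identity by Lemma~\ref{lem:atomic}, so that $\lambda|_{\mathscr{X}}$ really is a size functor on $\mathscr{X}$.

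For the existence half of the (UFP) in $\mathscr{X}$, let $a\in\mathscr{X}$ with $\lambda(a)=\mathbf{m}+\mathbf{n}$. Applying the (WFP) in $C$ gives $a=bc$ with $\lambda(b)=\mathbf{m}$, $\lambda(c)=\mathbf{n}$. Using Proposition~\ref{prop:fifth} write $b=ug$ with $u\in\mathscr{X}$, $g\in G$; then $\lambda(u)=\mathbf{m}$ and $a=u(gc)$. Using Proposition~\ref{prop:fifth} again write $gc=wk$ with $w\in\mathscr{X}$, $k\in G$; then $\lambda(w)=\mathbf{n}$ and $a=(uw)k$ with $uw\in\mathscr{X}$. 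Since $a=a\,\mathbf{d}(a)$ is also an $\mathscr{X}G$-representation of $a$ and $a\in\mathscr{X}$, the uniqueness above forces $uw=a$. Thus $a=uw$ with $u,w\in\mathscr{X}$, $\lambda(u)=\mathbf{m}$, $\lambda(w)=\mathbf{n}$ (the degenerate cases $\mathbf{m}=\mathbf{0}$ or $\mathbf{n}=\mathbf{0}$ come out correctly, with $u$ or $w$ an identity).

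For uniqueness, suppose $a=b_{1}b_{2}=uw$ with all four factors in $\mathscr{X}$ and $\lambda(b_{1})=\lambda(u)=\mathbf{m}$, $\lambda(b_{2})=\lambda(w)=\mathbf{n}$. The (WFP) supplies a distinguished factorization $a=a_{1}a_{2}$ through which every factorization of this shape factors up to an invertible of $C$; thus $b_{1}=a_{1}g$ and $u=a_{1}g'$ for some $g,g'\in G$, giving $b_{1}=u\,(g')^{-1}g$ in $C$. The $\mathscr{R}$-condition now yields $b_{1}=u$ (and $g=g'$), and hence $b_{2}=w$. Therefore $\delta=\lambda|_{\mathscr{X}}$ satisfies the (UFP) and $\mathscr{X}$ is a higher rank $k$-graph; combining the existence statement of Proposition~\ref{prop:fifth} with the uniqueness of $\mathscr{X}G$-representations from the first paragraph then shows that each element of $C$ has a unique expression as $xg$ with $x\in\mathscr{X}$ and $g\in G$.

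I expect the existence half of the (UFP) to be the main obstacle: the (WFP) only delivers factors of $a$ lying in $C$, and the work is to replace them by factors in $\mathscr{X}$ of the prescribed sizes without disturbing the product. This is precisely where the $\mathscr{R}$-condition is indispensable, through the uniqueness of the $\mathscr{X}G$-representation; without it one recovers the factorization only up to an invertible, which would not suffice.
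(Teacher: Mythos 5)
Your proof is correct and follows essentially the same route as the paper: use Proposition~\ref{prop:fifth} to write the factors supplied by the (WFP) in the form $\mathscr{X}G$, then invoke the $\mathscr{R}$-condition to absorb the invertible parts and to get uniqueness both of the (UFP) factorization in $\mathscr{X}$ and of the $\mathscr{X}G$-representation. The only difference is organizational --- you prove uniqueness of the $\mathscr{X}G$-representation first and use it as the working tool (slightly streamlining the existence step, which the paper does with one more intermediate decomposition) --- but the ideas and the way the $\mathscr{R}$-condition enters are the same.
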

\begin{proof} Observe that apart from the identities the subcategory $\mathscr{X}$
does not contain any invertible elements because each element non-identity element of $\mathscr{X}$
is a product of atoms and if $a$ is an atom then $\lambda (a) \neq \mathbf{0}$.
It follows that $\mathscr{X}$ is a subcategory with only the identities
as invertible elements.
Let $a \in \mathscr{X}$.
Define $d(a) = \lambda (a)$.
This is clearly a functor.
It remains to show that we have the unique factorization property (UFP).
Let $a \in \mathscr{X}$ be such that $d(a) = \mathbf{m} + \mathbf{n}$.
Then by the (WFP), there are elements $b,c \in C$ such that 
$a = bc$ where $d(b) = \mathbf{m}$ and $d(c) = \mathbf{n}$.
We may write $b = ug$ for some $u \in \mathscr{X}$ and $g \in G$
and we may write $c = vh$ in a similar way.
Thus $a = ugvh$.
Observe that $d(b) = d(u)$ and $d(c) = d(v)$.
We may write $gv = wk$.
Observe that $d(w) = d(v)$.
Thus $a = uwkg$ where $a,u,w \in \mathscr{X}$ and $k,g \in G$.
We now invoke the $\mathscr{R}$-condition,
and deduce that $a = uw$ where $d(u) = \mathbf{m}$ and $d(w) = \mathbf{n}$.
We now prove that $u$ and $w$ are unique with these properties.
Suppose that $a = u_{1}w_{1}$ where $u_{1}, w_{1} \in \mathscr{X}$
and $d(u_{1}) = \mathbf{m}$ and $d(w_{1}) = \mathbf{n}$.
Then by the (WFP), there is an isomorphism $l$ such that
$u_{1} = ul$ and $w_{1} = l^{-1}w$.
By the $\mathscr{R}$-condition, we deduce that $u_{1} = u$ and so $w_{1} = w$.
Thus $\mathscr{X}$ is a higher rank $k$-graph.
It only remains to show that every element has a unique factorization.
Let $ug = vh$ where $u,v \in \mathscr{X}$ and $g,h \in G$.
Then $u = v(hg^{-1})$.
By the $\mathscr{R}$-condition, $hg^{-1} = 1$ and $u = v$;
thus $g = h$, also.
\end{proof}

Our goal now is to show how to construct categories of the above type from
higher rank $k$-graphs and groupoids.
We shall use {\em Zappa-Sz\'ep products} \cite{B2005} or, equivalently, what are termed 
{\em self-similar groupoid actions} in \cite{ABRW}.
The motivation for doing this comes from \cite{Lawson2008a} and the sequence of papers
that followed \cite{Lawson2008b, LW2015, LW2017}.
Let $\mathscr{X}$ be a category and 
let $G$ be a groupoid such that $G_{o} = \mathscr{X}_{o}$.
We suppose that there is a function
$G \times \mathscr{X} \rightarrow \mathscr{X}$ given by $(g,x) \mapsto g \cdot x$,
whenever $\mathbf{d}(g) = \mathbf{r}(x)$, 
and a function
$G \times \mathscr{X} \rightarrow G$ given by $(g, x) \mapsto g|_{x}$,
whenever $\mathbf{d}(g) = \mathbf{r}(x)$,
such that the axioms (C1), (C2), (C3) and (SS1)--(SS8) listed below all hold:
\begin{description}
\item[{\rm (C1)}] $\mathbf{r}(g \cdot x) = \mathbf{r}(g)$.
\item[{\rm (C2)}] $\mathbf{d}(g \cdot x) = \mathbf{r}(g|_{x})$.
\item[{\rm (C3)}] $\mathbf{d}(x) = \mathbf{d}(g|_{x})$.
\end{description} 
This information is summarized by the following diagram:

\centerline{
\xymatrix
{
 & \ar[l]_{g \cdot x}   \\
\ar[u]^{g}&  \ar[u]_{g|_{x}} \ar[l]^{x}
}
}

 \begin{description}
\item[{\rm (SS1)}] $\mathbf{r}(x) \cdot x = x$.
\item[{\rm (SS2)}] $(gh) \cdot x = g \cdot (h \cdot x)$.
\item[{\rm (SS3)}] $g \cdot \mathbf{d}(g) = \mathbf{r}(g)$.
\item[{\rm (SS4)}] $\mathbf{r}(x)|_{x} = \mathbf{d}(x)$.
\item[{\rm (SS5)}] $g|_{\mathbf{d}(g)} = g$.
\item[{\rm (SS6)}] $g|_{xy} = (g|_{x})|_{y}$.
\item[{\rm (SS7)}] $(gh)|_{x} = g|_{h \cdot x}h|_{x}$.
\item[{\rm (SS8)}] $g \cdot (xy) = (g \cdot x)(g_{x} \cdot y)$.
\end{description}

We call this a {\em Zappa-Sz\'ep action} of $G$ on $\mathscr{X}$
or a {\em self-similar action of $G$ on $\mathscr{X}$.}

\begin{remark}\label{rem:home}
{\em Zappa-Sz\'ep products are two-sided semidirect products.
There is a left action of $G$ on $\mathscr{X}$ denoted by $(g,x) \mapsto g \cdot x$
and there is a right action of $\mathscr{X}$ on $G$ denoted by 
$(g,x) \mapsto g|_{\mathbf{d}(g)}$.
The axioms (SS1), (SS2), (SS5) and (SS6) tell us that we are dealing with actions.
Axioms (SS3) and (SS4) tell us how identities act and axioms (SS7) and (SS8)
tell us how the category structures on $G$ and $\mathscr{X}$ interact with the actions.}
\end{remark}

We can use self-similar actions to build categories as we now show.

\begin{proposition}\label{prop:needed} \mbox{}
\begin{enumerate}

\item Let there be a Zappa-Sz\'ep action of the groupoid $G$ on the category $\mathscr{X}$.
Define $\mathscr{X} \bowtie G$ to be the set of all ordered pairs $(x,g)$
where $\mathbf{d}(x) = \mathbf{r}(g)$.
Define 
$$\mathbf{d}(x,g) = (\mathbf{d}(g), \mathbf{d}(g)) 
\text{ and }
\mathbf{r}(x,g) = (\mathbf{r}(x), \mathbf{r}(x)).$$
Define a product by
$$(x,g)(y,h) = (x(g \cdot y), g|_{y}h)$$
when $\mathbf{d}(x,g) = \mathbf{r}(y,h)$.
Then $\mathscr{X} \bowtie G$ is a category 
with set of identities $(e,e)$ where $e \in G_{o} = \mathscr{X}_{o}$.
Define 
$$\iota_{\mathscr{X}} \colon \mathscr{X} \rightarrow \mathscr{X} \bowtie G$$ 
by $x \mapsto (x, \mathbf{d}(x))$. This is an injective functor.
Define 
$$\iota_{G} \colon G \rightarrow \mathscr{X} \bowtie G$$ 
by $g \mapsto (\mathbf{r}(g), g)$. This is an injective functor.
Observe that if $(x,g) \in  \mathscr{X} \bowtie G$ then
$(x,g) = \iota_{\mathscr{X}}(x) \iota_{G}(g)$.

\item Let $C$ be a category with groupoid of isomorphisms $G$ and wide subcategory $\mathscr{X}$ such that $C = \mathscr{X}G$ uniquely.
Then $C$ is isomorphic to $\mathscr{X} \bowtie G$.

\end{enumerate}
\end{proposition}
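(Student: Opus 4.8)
The plan is to prove (1) by a direct verification of the category axioms, and then to obtain (2) by reversing the construction --- reading the eleven Zappa--Sz\'ep axioms off the assumed unique factorization $C = \mathscr{X}G$.

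\textbf{Part (1).} First I would check that the product is well defined: whenever $\mathbf{d}(x,g) = \mathbf{r}(y,h)$, I must show that $(x(g\cdot y), g|_{y}h)$ again lies in $\mathscr{X}\bowtie G$ and has the asserted domain and range. This is exactly where (C1)--(C3) are used: (C1) gives $\mathbf{r}(g\cdot y) = \mathbf{r}(g) = \mathbf{d}(x)$ so that $x(g\cdot y)$ is defined; (C2) and (C3) give $\mathbf{d}(x(g\cdot y)) = \mathbf{d}(g\cdot y) = \mathbf{r}(g|_{y})$ and $\mathbf{d}(g|_{y}) = \mathbf{d}(y) = \mathbf{r}(h)$, so that $g|_{y}h$ is defined and the resulting pair is admissible; and $\mathbf{r}(x(g\cdot y),g|_{y}h) = (\mathbf{r}(x),\mathbf{r}(x)) = \mathbf{r}(x,g)$, similarly for $\mathbf{d}$. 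Next I would verify associativity: expanding $((x,g)(y,h))(z,k)$ and $(x,g)((y,h)(z,k))$, both should reduce, using (SS2), (SS6), (SS7) and (SS8), to the common value $\bigl(x\,(g\cdot y)\,(g|_{y}\cdot(h\cdot z)),\ (g|_{y})|_{h\cdot z}\,h|_{z}\,k\bigr)$; this is the one genuinely laborious computation. Finally I would check that $(e,e)$ is a two-sided identity --- left units using (SS1) and (SS4), right units using (SS3) and (SS5) --- and that $\iota_{\mathscr{X}}$ and $\iota_{G}$ are injective functors, the functor property again coming from (SS1), (SS3), (SS4), (SS5) and injectivity being immediate. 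The identity $(x,g) = \iota_{\mathscr{X}}(x)\iota_{G}(g)$ is then a one-line check using the same small axioms.

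\textbf{Part (2).} Since $\mathscr{X}$ and $G$ are wide subcategories of $C$, we have $\mathscr{X}_{o} = G_{o} = C_{o}$, so the hypotheses of the Zappa--Sz\'ep setup on objects are met. Given $g\in G$ and $x\in\mathscr{X}$ with $\mathbf{d}(g) = \mathbf{r}(x)$, the element $gx\in C$ has, by hypothesis, a \emph{unique} factorization $gx = (g\cdot x)(g|_{x})$ with $g\cdot x\in\mathscr{X}$ and $g|_{x}\in G$; this defines the two maps $(g,x)\mapsto g\cdot x$ and $(g,x)\mapsto g|_{x}$ (well-definedness is precisely the uniqueness). Axioms (C1)--(C3) are immediate from the behaviour of $\mathbf{d}$ and $\mathbf{r}$ under this factorization. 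Each (SS) axiom is then obtained by writing a single product of $C$ in two ways and invoking uniqueness: factoring $g\,\mathbf{d}(g) = g$ yields (SS3) and (SS5) at once; factoring $\mathbf{r}(x)\,x = x$ yields (SS1) and (SS4); computing $(gh)x = g(hx)$ in the form $\bigl(g\cdot(h\cdot x)\bigr)\bigl(g|_{h\cdot x}\,h|_{x}\bigr)$ yields (SS2) and (SS7); and computing $g(xy) = (gx)y$ in the form $\bigl((g\cdot x)(g|_{x}\cdot y)\bigr)\bigl((g|_{x})|_{y}\bigr)$ yields (SS8) and (SS6) (here one notes along the way that $g|_{x}\cdot y$ is defined, since $\mathbf{d}(g|_{x}) = \mathbf{d}(x) = \mathbf{r}(y)$ by (C3), and that $(g\cdot x)(g|_{x}\cdot y)$ lies in $\mathscr{X}$ by (C1) and (C2)). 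Finally, the map $\phi\colon\mathscr{X}\bowtie G\to C$, $(x,g)\mapsto xg$, is a bijection by uniqueness of the factorization $C = \mathscr{X}G$; it respects $\mathbf{d}$, $\mathbf{r}$ and composability (both $(x,g)(y,h)$ and $(xg)(yh)$ require $\mathbf{d}(g) = \mathbf{r}(y)$); and $\phi((x,g)(y,h)) = x(g\cdot y)(g|_{y})h = x(gy)h = (xg)(yh) = \phi(x,g)\phi(y,h)$, so $\phi$ is an isomorphism of categories.

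The main obstacle will be organizational rather than conceptual: in (1) it is the associativity computation together with all the composability side-conditions, and in (2) it is the careful bookkeeping needed to confirm that the recovered maps are genuine functions (this is exactly the uniqueness hypothesis) and that all eleven axioms, and the various "is this product defined?" checks, come out correctly from reading unique factorizations in $C$.
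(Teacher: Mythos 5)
Your proposal is correct and follows essentially the same route as the paper: part (1) is the routine verification (the paper delegates it to Brin's general arguments and sketches exactly the checks you list), and part (2) recovers the two maps from the unique factorization and derives the axioms from the same four identities the paper cites ($\mathbf{r}(x)x = x$, $g\mathbf{d}(g) = g$, $(gh)x = g(hx)$, $g(xy) = (gx)y$), followed by the same isomorphism $(x,g)\mapsto xg$. Your write-up simply supplies more of the bookkeeping that the paper declares routine.
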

\begin{proof} (1) The proofs are special cases of general arguments given in \cite{B2005}.
We describe the salient points.
Define 
$\mathbf{d}(x,g) = (\mathbf{d}(g), \mathbf{d}(g))$ 
and 
$\mathbf{r}(x,g) = (\mathbf{r}(x), \mathbf{r}(x))$.
Observe that the product $(x,g)(y,h)$ is defined if and only if $\mathbf{d}(g) = \mathbf{r}(y)$.
It is routine now to check that $\mathscr{X} \bowtie G$ is a category with set of identities
$(e,e)$ where $e \in G_{o} = \mathscr{X}_{o}$.
The product $(x,\mathbf{d}(x))(y, \mathbf{d}(y))$ exists if and only if $\exists xy$
and the product is equal to $(x (\mathbf{d}(x) \cdot y), \mathbf{d}(x)|_{y}\mathbf{d}(y))$
which simplifies to $(xy, \mathbf{d}(y))$ using axioms (SS1) and (SS4).
The product $(\mathbf{r}(g),g)(\mathbf{r}(h), h)$ exists if and only if $\exists gh$
and the product is equal to $(\mathbf{r}(g)(g \cdot \mathbf{r}(h), g|_{\mathbf{r}(h)}h)$
which simplifies to $(\mathbf{r}(g), gh)$ using axioms (SS3) and (SS5).

(2) Let $g \in G$ and $x \in \mathscr{X}$ be such that $gx$ is defined.
Then by assumption, there are unique elements $g \cdot x \in \mathscr{X}$
and $g|_{x} \in G$ such that $gx = (g \cdot x) g|_{x}$.
Checking domains and ranges leads to the verification of axioms (C1), (C2) and (C3)
for a Zappa-Sz\'ep product.
The verification of axioms (SS1)--(SS8) follows from using the following equalitites:
$x = \mathbf{r}(x)x$;
$(gh)x = g(hx)$;
$g = g(g^{-1}g)$;
$g(xy) = (gx)y$. 
It is now routine to check that $C$ is isomorphic to $\mathscr{X} \bowtie G$.
\end{proof}

We now add extra structure to the categories that form the basis of our Zappa-Sz\'ep product.

\begin{lemma}\label{lem:maryland}
Let there be a Zappa-Sz\'ep action of the groupoid $G$ on the category $\mathscr{X}$.
If $\mathscr{X}$ is left cancellative so too is $\mathscr{X} \bowtie G$
\end{lemma}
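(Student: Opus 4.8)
The plan is to verify left cancellativity of $\mathscr{X} \bowtie G$ directly from the product formula, reducing everything to left cancellativity of $\mathscr{X}$ (which we are given) together with the fact that $G$ is a groupoid (hence cancellative) and the self-similarity axioms. Suppose $(x,g)(y,h) = (x,g)(y',h')$ with the products defined; unpacking the product formula this means $x(g\cdot y) = x(g\cdot y')$ in $\mathscr{X}$ and $g|_{y}h = g|_{y'}h'$ in $G$. From left cancellativity of $\mathscr{X}$ applied to the first equation we get $g\cdot y = g\cdot y'$.

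Next I would use the left action of $G$ on $\mathscr{X}$: since $\mathbf{d}(g) = \mathbf{r}(y) = \mathbf{r}(y')$ (the products being defined forces these), the element $g^{-1}$ satisfies $\mathbf{d}(g^{-1}) = \mathbf{r}(g) = \mathbf{r}(g\cdot y)$ by (C1), so $g^{-1}\cdot(g\cdot y)$ is defined; by (SS2) and (SS3)/(SS1)-type bookkeeping, $g^{-1}\cdot(g\cdot y) = (g^{-1}g)\cdot y = \mathbf{d}(g)\cdot y = \mathbf{r}(y)\cdot y = y$, and similarly for $y'$. Applying $g^{-1}\cdot(-)$ to $g\cdot y = g\cdot y'$ therefore gives $y = y'$. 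Once $y = y'$ we have $g|_{y} = g|_{y'}$, and then the equation $g|_{y}h = g|_{y}h'$ in the groupoid $G$ yields $h = h'$ by left cancellativity in $G$ (equivalently, multiply on the left by $(g|_{y})^{-1}$, which is legitimate since all the relevant domains and ranges match up by (C2) and (C3)). Hence $(y,h) = (y',h')$, as required.

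The only genuinely delicate point — and the step I would flag as the main obstacle — is making sure all the compositions invoked actually exist, i.e.\ keeping careful track of domains and ranges when passing from $g\cdot y = g\cdot y'$ back to $y = y'$. This is where axioms (C1), (C2), (C3) and (SS2), (SS3) (or (SS1)) have to be used explicitly rather than hand-waved; in particular one needs $g^{-1}$ to act on $g\cdot y$, which requires $\mathbf{d}(g^{-1}) = \mathbf{r}(g\cdot y)$, supplied by (C1). Everything else is a routine unpacking of the definition of the product in $\mathscr{X}\bowtie G$ and two applications of cancellativity (one in $\mathscr{X}$, one in the groupoid $G$), so the proof should be short.
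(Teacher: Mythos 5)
Your proof is correct and follows essentially the same route as the paper: unpack the product, cancel in $\mathscr{X}$ to get $g\cdot y = g\cdot y'$, recover $y = y'$, then cancel in $G$. The paper's version is simply terser — it asserts $y = z$ and $h = k$ without spelling out the act-by-$g^{-1}$ and groupoid-cancellation steps that you make explicit.
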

\begin{proof} Suppose that $(x,g)(y,h) = (x,y)(z,k)$.
Then $x(g \cdot y) = x(g \cdot z)$ and $g|_{y}h = g|_{z}k$.
By left cancellation $g \cdot y = g \cdot z$.
Thus $y = z$  and so $h = k$.
\end{proof}

\begin{lemma}\label{lem:now-now}
Let there be a Zappa-Sz\'ep action of the groupoid $G$ on the category $\mathscr{X}$.
Then
$$g^{-1}|_{x} = (g|_{g^{-1} \cdot x})^{-1}.$$
\end{lemma}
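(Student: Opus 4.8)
The plan is to verify the claimed identity $g^{-1}|_{x} = (g|_{g^{-1} \cdot x})^{-1}$ directly from the Zappa-Sz\'ep axioms, and the natural tool is axiom (SS7), which tells us how restriction interacts with composition in $G$. First I would check that both sides make sense: for $g^{-1}|_{x}$ to be defined we need $\mathbf{d}(g^{-1}) = \mathbf{r}(x)$, i.e. $\mathbf{r}(g) = \mathbf{r}(x)$; then $g^{-1} \cdot x$ is defined with $\mathbf{r}(g^{-1} \cdot x) = \mathbf{r}(g^{-1}) = \mathbf{d}(g)$ by (C1), so $g|_{g^{-1} \cdot x}$ is defined, and one should record its domain and range via (C2) and (C3) to confirm the inverse on the right-hand side is the appropriate one.

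The key computation is to apply (SS7) to the product $g g^{-1}$, which equals $\mathbf{r}(g)$. Taking $h = g^{-1}$ and evaluating restriction at $x$ (noting $\mathbf{d}(g) = \mathbf{r}(h) $ need not directly match, so one must instead use the composable pair in the right order): the cleaner route is to apply (SS7) to $g^{-1}$ and $g$ is not composable in that order either, so I would instead restrict the identity $g^{-1} g = \mathbf{d}(g^{-1}) = \mathbf{r}(g)$. Hmm — more carefully: we want a product of two elements of $G$, the first being $g$ and the second being $g^{-1}$, composable as $g g^{-1} = \mathbf{r}(g)$, and we want to restrict this at the element $g^{-1} \cdot x$, since $\mathbf{d}(g^{-1}) = \mathbf{r}(g^{-1} \cdot x)$ is exactly the compatibility needed. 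Applying (SS7) with the pair $(g, g^{-1})$ at $g^{-1}\cdot x$ gives
$$(g g^{-1})|_{g^{-1} \cdot x} = g|_{g^{-1} \cdot (g^{-1} \cdot x)} \, \cdot \, g^{-1}|_{g^{-1} \cdot x},$$
which is not quite what we want because the subscript on the first factor is wrong. Instead I would restrict the pair $(g^{-1}, g)$: we have $g^{-1} g = \mathbf{d}(g) = \mathbf{r}(g^{-1})$, wait that is $\mathbf{d}(g)$. The correct choice is to restrict $g^{-1} g$ at $x$ when $\mathbf{d}(g) = \mathbf{r}(x)$, but our hypothesis is $\mathbf{r}(g) = \mathbf{r}(x)$. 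So the right move is to restrict $g g^{-1} = \mathbf{r}(g)$ at $x$: (SS7) applied to $(g, g^{-1})$ at $x$ yields $(g g^{-1})|_x = g|_{g^{-1} \cdot x} \cdot g^{-1}|_x$. The left side is $\mathbf{r}(g)|_x$, which by (SS4) equals $\mathbf{d}(x)$. Hence $g|_{g^{-1}\cdot x} \cdot g^{-1}|_x = \mathbf{d}(x)$, and combined with the analogous computation restricting $g^{-1} g$ (in the other order, once one checks composability), this shows $g^{-1}|_x$ is a two-sided inverse of $g|_{g^{-1}\cdot x}$, giving the result.

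So the skeleton of the proof is: (i) check all domains and ranges so the asserted equation is well-typed; (ii) apply (SS7) to $(g, g^{-1})$ restricted at $x$ together with (SS4) to get $g|_{g^{-1}\cdot x} \, g^{-1}|_x = \mathbf{d}(x)$; (iii) do the symmetric computation — apply (SS7) to $(g^{-1}, g)$ restricted at $g^{-1}\cdot x$, using (SS2) and (SS3) to simplify $g \cdot (g^{-1} \cdot x)$ to $x$, and (SS4) again — to get $g^{-1}|_x \, g|_{g^{-1}\cdot x} = \mathbf{d}(g^{-1}\cdot x)$; (iv) conclude that $g^{-1}|_x = (g|_{g^{-1}\cdot x})^{-1}$ since inverses in a groupoid are unique. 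The main obstacle is purely bookkeeping: getting the order of composition and the point of restriction exactly right so that (SS7) applies, and correctly simplifying the iterated actions $g^{-1}\cdot(g^{-1}\cdot x)$ versus $g\cdot(g^{-1}\cdot x)$ using (SS2) and (SS3); once the composability conditions are lined up, the identities (SS4) and (SS7) do all the work and there is no genuine difficulty.
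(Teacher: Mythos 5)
Your proof is correct and is essentially the paper's own argument: the key step in both is applying (SS7) to the composable pair $(g,g^{-1})$ restricted at $x$ and simplifying $\mathbf{r}(g)|_{x}=\mathbf{r}(x)|_{x}=\mathbf{d}(x)$ via (SS4). Your extra symmetric computation (iii) is harmless but unnecessary, since in a groupoid the single identity $g|_{g^{-1}\cdot x}\,g^{-1}|_{x}=\mathbf{d}(x)$ already forces $g^{-1}|_{x}=(g|_{g^{-1}\cdot x})^{-1}$ (and in that step the simplification $g\cdot(g^{-1}\cdot x)=x$ uses (SS2) together with (SS1), not (SS3)).
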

\begin{proof} We calculate from the axioms for a Zapp-Sz\'ep product
$$g|_{g^{-1} \cdot x} g^{-1} |_{x} = \mathbf{r}(g)|_{x}$$
by axiom (SS7).
Thus by axiom (SS4) we get that:
$$g|_{g^{-1} \cdot x} g^{-1} |_{x} = \mathbf{d}(x).$$
The result now follows.
\end{proof}

We now refine
Proposition~\ref{prop:needed}
by incorporating
Lemma~\ref{lem:now-now}
and applying it to the case where
$\mathscr{X}$ is a higher rank $k$-graph.
Denote by $\delta \colon \mathscr{X} \rightarrow \mathbb{N}^{k}$
the usual size functor.
We say that our Zappa-Sz\'ep action is  {\em size-preserving}
if 
$$\delta (g \cdot x) = \delta (x).$$

\begin{proposition}\label{prop:needed-new}
Let there be a size-preserving 
Zappa-Sz\'ep action of the groupoid $G$ on the higher rank $k$-graph $\mathscr{X}$.
Define $\lambda \colon \mathscr{X} \bowtie G \rightarrow \mathbb{N}^{k}$
by $\lambda (x,g) = \delta (x)$.
Then this is a well-defined functor and $\mathscr{X} \bowtie G$
is a generalized higher-rank $k$-graph satisfying the $\mathscr{R}$-condition. 
\end{proposition}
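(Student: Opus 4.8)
The plan is to verify each claim in sequence: first that $\lambda$ is well-defined and a functor, then that it is a size functor (i.e.\ $\lambda^{-1}(\mathbf 0)$ is exactly the groupoid of invertibles of $\mathscr{X}\bowtie G$), then that it satisfies the (WFP), and finally that the $\mathscr{R}$-condition holds. Well-definedness of $\lambda$ is immediate from its formula, since a pair $(x,g)$ determines $x$. To see it is a functor, I would compute $\lambda\bigl((x,g)(y,h)\bigr)=\lambda\bigl(x(g\cdot y),\,g|_y h\bigr)=\delta\bigl(x(g\cdot y)\bigr)=\delta(x)+\delta(g\cdot y)$, and then invoke the size-preserving hypothesis $\delta(g\cdot y)=\delta(y)$ to get $\delta(x)+\delta(y)=\lambda(x,g)+\lambda(y,h)$; it clearly sends identities to $\mathbf 0$.

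Next I would show $\lambda$ is a genuine size functor. If $\lambda(x,g)=\mathbf 0$ then $\delta(x)=\mathbf 0$, so $x$ is an identity $e$ of $\mathscr{X}$ (as $\mathscr{X}$ is a higher rank $k$-graph), hence $(x,g)=(e,g)=\iota_G(g)$, which is invertible in $\mathscr{X}\bowtie G$ because $g$ is invertible in $G$; using Lemma~\ref{lem:now-now} one writes down the explicit inverse $(\mathbf r(g^{-1}), g^{-1})$, after reconciling $g^{-1}\cdot e$ and $g^{-1}|_e$ with (SS3) and (SS5). Conversely, if $(x,g)$ is invertible with inverse $(y,h)$, then $\lambda(x,g)+\lambda(y,h)=\mathbf 0$ forces $\delta(x)=\mathbf 0$, so $\lambda(x,g)=\mathbf 0$. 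This also identifies the groupoid of invertibles of $\mathscr{X}\bowtie G$ with $\iota_G(G)$.

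For the (WFP), suppose $\lambda(x,g)=\mathbf m+\mathbf n$, i.e.\ $\delta(x)=\mathbf m+\mathbf n$. Apply the UFP in $\mathscr{X}$ to factor $x=x_1x_2$ uniquely with $\delta(x_1)=\mathbf m$, $\delta(x_2)=\mathbf n$. I claim $(x,g)=(x_1,\mathbf d(x_1))(x_2,g)$: indeed the right-hand product is $\bigl(x_1(\mathbf d(x_1)\cdot x_2),\,\mathbf d(x_1)|_{x_2}\,g\bigr)=(x_1x_2,g)$ by (SS1) and (SS4), as in the proof of Proposition~\ref{prop:needed}(1). Now suppose $(x,g)=(b_1,p)(b_2,q)$ with $\lambda$-values $\mathbf m,\mathbf n$; then $b_1(p\cdot b_2)=x$ and $p|_{b_2}q=g$, with $\delta(b_1)=\mathbf m$ and $\delta(p\cdot b_2)=\delta(b_2)=\mathbf n$ by size-preservation. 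Uniqueness of the UFP factorization in $\mathscr{X}$ gives $b_1=x_1$ and $p\cdot b_2=x_2$. It remains to produce an invertible $(e',g')$ of $\mathscr{X}\bowtie G$, necessarily of the form $\iota_G$ of a groupoid element, conjugating $(x_1,\mathbf d(x_1)),(x_2,g)$ to $(b_1,p),(b_2,q)$; the natural candidate is $\iota_G(p)$, and checking $(x_1,\mathbf d(x_1))\iota_G(p)=(b_1,p)$ and $\iota_G(p)^{-1}(x_2,g)=(b_2,q)$ reduces, via (SS8), Lemma~\ref{lem:now-now}, and the relation $p\cdot b_2=x_2$, to the identities $p|_{b_2}q=g$ and $\delta(b_1)=\mathbf m$ already in hand.

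Finally, for the $\mathscr{R}$-condition one must identify the transversal $X$ of generators of the maximal principal right ideals and the subcategory it generates. By Lemma~\ref{lem:third} and Lemma~\ref{lem:atomic}, the atoms of $\mathscr{X}\bowtie G$ are the $(a,g)$ with $\delta(a)=\mathbf e_i$; using the factorization $(a,g)=\iota_{\mathscr{X}}(a)\iota_G(g)$ and Lemma~\ref{lem:first}, each such atom is $\mathscr{R}$-equivalent to one of the form $\iota_{\mathscr{X}}(a)=(a,\mathbf d(a))$ with $a$ an atom of $\mathscr{X}$. Choosing $X=\{(a,\mathbf d(a)):a\text{ an atom of }\mathscr{X}\}$, the generated subcategory is $\iota_{\mathscr{X}}(\mathscr{X})$, and the $\mathscr{R}$-condition becomes: if $(u,\mathbf d(u))=(v,\mathbf d(v))(\,e,g\,)$ in $\mathscr{X}\bowtie G$ with $u,v\in\mathscr{X}$ and $g\in G$, then $u=v$ and $g$ is an identity. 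Expanding the product gives $\bigl(v(\,\mathbf d(v)\cdot e\,),\,\mathbf d(v)|_e\,g\bigr)=(v,g)$, so $u=v$ and $g=\mathbf d(u)$, which is an identity. I expect the main obstacle to be the bookkeeping in the (WFP) step — specifically, verifying that $\iota_G(p)$ really does conjugate the two factorizations, which requires careful use of (SS8) together with Lemma~\ref{lem:now-now} to handle the restriction appearing on the second factor; everything else is a routine unwinding of the axioms (C1)--(C3), (SS1)--(SS8).
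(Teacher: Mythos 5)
Your proposal is correct and takes essentially the same route as the paper's proof: existence of the factorization via the UFP of $\mathscr{X}$ through $(x,g)=(x_1,\mathbf{d}(x_1))(x_2,g)$, and uniqueness up to the invertible element $\iota_G(p)$ using Lemma~\ref{lem:now-now}. You also fill in explicitly the steps the paper dismisses as ``clear'' or ``routine'' (that $\lambda$ is a size functor, the identification of the invertibles with $\iota_G(G)$, and the choice of transversal $X=\{(a,\mathbf{d}(a))\}$ for the $\mathscr{R}$-condition), which is consistent with the paper's argument.
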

\begin{proof} It is clear that $\lambda$ is a size functor.
Let $(x,g)$ be an element such that $\lambda (x,g) = \mathbf{m} + \mathbf{n}$.
Then by definition $\delta (x) = \mathbf{m} + \mathbf{n}$.
But $\mathscr{X}$ is a higher rank $k$-graph.
Thus there are unique elements $u$ and $v$ such that $x = uv$
where $\delta (u) = \mathbf{m}$ and $\delta (v) = \mathbf{n}$.
Observe that $(x,g) = (u, \mathbf{d}(u))(v,g)$ where 
$\lambda (u, \mathbf{d}(u)) = \mathbf{m}$
and
$\lambda (v,g) = \mathbf{n}$. 
Now suppose that $(x,g) = (w,h)(z,k)$
where $\lambda (w,h) = \mathbf{m}$ vand $\lambda (z,k) = \mathbf{n}$.
Thus $x = w (h \cdot z)$ and $g = h|_{z}k$.
By uniqueness, $w = u$ and $v = h \cdot z$.
Observe that
$(w,h) = (u, \mathbf{d}(u))(\mathbf{r}(h), h)$
and
$(z,k) = (\mathbf{r}(h), h)^{-1}(v,g)$
where we use the fact that
$$(h|_{z})^{-1} = h^{-1}|_{v},$$
a consequence of Lemma~\ref{lem:now-now}.
It is routine to check that the $\mathscr{R}$-condition holds. 
\end{proof}

 We round off this paper with a theorem which pulls together the results of this section.
 
 \begin{theorem}\label{them:one} 
 Let $C$ be a generalized higher rank $k$-graph satisfying the $\mathscr{R}$-condition.
 Denote by $\lambda \colon C \rightarrow \mathbb{N}^{k}$ the size functor.
 Then $C$ is isomorphic to a Zappa-Sz\'ep product  of the form $\mathscr{X} \bowtie G$
 where $\mathscr{X}$ is a higher rank $k$-graph, $G$ is a groupoid
 and the Zappa-Sz\'ep action is size-preserving.
 The functor $\lambda$ is determined by the size functor $\delta$ defined on $\mathscr{X}$.
 \end{theorem}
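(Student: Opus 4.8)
The plan is to assemble pieces already in place rather than to prove anything genuinely new. First I would apply Theorem~\ref{them:theorem-one}: choose and fix a transversal $X$ of the set of generators of the maximal principal right ideals and set $\mathscr{X} = \langle X \rangle$, the wide subcategory of $C$ generated by $X$, and let $G$ be the groupoid of invertible elements of $C$. Then Theorem~\ref{them:theorem-one} tells us that $\mathscr{X}$ is a higher rank $k$-graph and that every element of $C$ has a \emph{unique} representation in the form $xg$ with $x \in \mathscr{X}$ and $g \in G$; moreover, since both $\mathscr{X}$ and $G$ are wide subcategories of $C$, we have $\mathscr{X}_{o} = G_{o} = C_{o}$. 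In the language of Proposition~\ref{prop:needed}(2), this says precisely that $C = \mathscr{X}G$ uniquely.

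Next I would invoke Proposition~\ref{prop:needed}(2): because $C = \mathscr{X}G$ uniquely, $C$ is isomorphic to $\mathscr{X} \bowtie G$, where the underlying Zappa-Sz\'ep action is the one recovered in the proof of that proposition, namely, for $g \in G$ and $x \in \mathscr{X}$ with $\exists gx$ in $C$, the elements $g \cdot x \in \mathscr{X}$ and $g|_{x} \in G$ are the unique ones for which $gx = (g \cdot x)(g|_{x})$. Under this isomorphism $x \in \mathscr{X}$ corresponds to $(x, \mathbf{d}(x))$, $g \in G$ corresponds to $(\mathbf{r}(g), g)$, and hence $xg \mapsto (x,g)$.

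It then remains to check size-preservation and the assertion about $\lambda$. Recall from the proof of Theorem~\ref{them:theorem-one} that on $\mathscr{X}$ the higher rank graph functor $\delta$ is exactly the restriction of $\lambda$, i.e.\ $\delta(x) = \lambda(x)$ for $x \in \mathscr{X}$, and that $\lambda(g) = \mathbf{0}$ for all $g \in G$ since $G = \lambda^{-1}(\mathbf{0})$. For $g \in G$ and $x \in \mathscr{X}$ with $\exists gx$, applying the functor $\lambda$ to $gx = (g \cdot x)(g|_{x})$ yields
\[
\lambda(x) = \lambda(g) + \lambda(x) = \lambda(gx) = \lambda(g \cdot x) + \lambda(g|_{x}) = \delta(g \cdot x),
\]
so $\delta(g \cdot x) = \delta(x)$, which is exactly size-preservation. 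Finally, transporting the size functor $\lambda$ of $C$ along the isomorphism $C \cong \mathscr{X} \bowtie G$ sends $(x,g)$ to $\lambda(xg) = \lambda(x) + \lambda(g) = \delta(x)$, which is precisely the functor appearing in Proposition~\ref{prop:needed-new}; thus $\lambda$ is determined by $\delta$.

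I do not expect a genuine obstacle: the theorem is essentially a repackaging of Theorem~\ref{them:theorem-one}, Proposition~\ref{prop:needed}(2) and Proposition~\ref{prop:needed-new}. The one place requiring a little care is the role of the $\mathscr{R}$-condition, which is what makes the factorization $C = \mathscr{X}G$ \emph{unique} (so that Proposition~\ref{prop:needed}(2) applies and the action $g \cdot x$, $g|_{x}$ is well defined) and also what forces $\mathscr{X}$ to be a bona fide higher rank $k$-graph via Theorem~\ref{them:theorem-one}. A secondary bookkeeping point is simply confirming $\mathscr{X}_{o} = G_{o} = C_{o}$, so that the Zappa-Sz\'ep product $\mathscr{X} \bowtie G$ is formed over the correct object set.
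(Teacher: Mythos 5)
Your proposal is correct and follows essentially the same route as the paper: invoke Theorem~\ref{them:theorem-one} to get the unique factorization $C=\mathscr{X}G$ with $\mathscr{X}$ a higher rank $k$-graph, recover the self-similar action via Proposition~\ref{prop:needed}(2), verify size-preservation by applying $\lambda$ to $gx=(g\cdot x)g|_{x}$, and note that $\delta$ is the restriction of $\lambda$ to $\mathscr{X}$. Your write-up is in fact somewhat more explicit than the paper's about where the $\mathscr{R}$-condition and the uniqueness of the factorization are used.
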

 \begin{proof} 
 Let $X$ be a transversal of the set of generators 
 of the maximal principal right ideals,
let $\mathscr{X}$ be the wide subcategory of $C$ generated by $X$
and let $G$ be the groupoid of invertible elements of $C$.
Then there is a self-similar action of $G$ on $\mathscr{X}$.
Observe that $\lambda (gx) = \lambda (x) = \lambda (g \cdot x)$.
There is therefore a size-preserving Zappa-Sz\'ep action by Proposition~\ref{prop:needed-new}.
We may therefore form the category $\mathscr{X} \bowtie G$.
Define a function $\theta \colon C \rightarrow \mathscr{X} \bowtie G$
by $\theta (a) = (u,g)$ if $a = ug$ where $u \in \mathscr{X}$ and $g \in G$.
Then $\theta$ is an isomorphism.
Observe that $\lambda (a) = \lambda (ug) = \lambda (u)$.
Define $\delta (u) = \lambda (u)$ when $u \in \mathscr{X}$.
We are done.
\end{proof}


\end{document}